\newtheorem{theorem}{Theorem}[section]
\newtheorem{proposition}[theorem]{Proposition}
\newtheorem{corollary}[theorem]{Corollary}
\newtheorem{definition}[theorem]{Definition}
\newtheorem{remark}[theorem]{Remark}
\newtheorem{lemma}[theorem]{Lemma}
\newcommand{\mbb}{\mathbb}
\newcommand{\lk}{\mbox{\upshape lk}}
\newcommand{\st}{\overline{\mbox{\upshape st}}}
\newcommand{\p}{^{\prime}}
\newcommand{\pp}{^{\prime\prime}}
\newcommand{\rk}{\mbox{\upshape rk}}
\newcommand{\init}{\mbox{\upshape init}}
\newcommand{\term}{\mbox{\upshape term}}
\newcommand{\Sd}{\mbox{\upshape Sd}}
\title{The Fundamental Group of Balanced Simplicial Complexes and Posets}
\author{Steven Klee\\
\small Department of Mathematics, Box 354350\\[-0.8ex]
\small University of Washington, Seattle, WA 98195-4350, USA,\\[-0.8ex]
\small \texttt{klees@math.washington.edu}}
\begin{document}
\maketitle

\begin{center}
\textit{Dedicated to Anders Bj\"orner on the occasion of his 60th birthday.}
\end{center}

\begin{abstract}
We establish an upper bound on the cardinality of a minimal generating set for the fundamental group of a large family of connected, balanced simplicial complexes and, more generally, simplicial posets.
\end{abstract}

\section{Introduction}
One commonly studied combinatorial invariant of a finite $(d-1)$-dimensional simplicial complex $\Delta$ is its $f$-vector $f = (f_0, \ldots, f_{d-1})$ where $f_i$ denotes the number of $i$-dimensional faces of $\Delta$.  This leads to the study of the $h$-numbers of $\Delta$ defined by the relation $\sum_{i=0}^dh_i\lambda^{d-i} = \sum_{i=0}^df_{i-1}(\lambda-1)^{d-i}$.  A great deal of work has been done to relate the $f$-numbers and $h$-numbers of $\Delta$ to the dimensions of the singular homology groups of $\Delta$ with coefficients in a certain field; see, for example, the work of Bj\"orner and Kalai in  \cite{BK88} and \cite{BK89}, and Chapters 2 and 3 of Stanley \cite{S96}.  In comparison, very little seems to be known about the relationship between the $f$-numbers of a simplicial complex and various invariants of its homotopy groups.  In this paper, we bound the minimal number of generators of the fundamental group of a balanced simplicial complex in terms of $h_2$.  More generally, we bound the minimal number of generators of the fundamental group of a balanced simplicial poset in terms of $h_2$.

It was conjectured by Kalai \cite{K87} and proved by Novik and Swartz in \cite{NS} that if $\Delta$ is a $(d-1)$-dimensional manifold that is orientable over the field $\mathbf{k}$, then $$h_2-h_1 \geq {d+1 \choose 2} \beta_1,$$ where $\beta_1$ is the dimension of the singular homology group $H_1(\Delta;\mathbf{k})$.  The Hurewicz Theorem (see Spanier \cite{Spanier}) says that $H_1(X;\mathbb{Z})$ is isomorphic to the abelianization of $\pi_1(X,*)$ for a connected space $X$.  We will see below that $\pi_1(\Delta,*)$ is finitely generated.  Thus the Hurewicz Theorem says that the minimal number of generators of the fundamental group of a simplicial complex $\Delta$ is greater than or equal to the number of generators of $H_1(\Delta;\mbb{Z})$.  By the universal coefficient theorem, $H_1(\Delta;\mathbf{k}) \approx H_1(\Delta;\mathbb{Z})\otimes \mathbf{k}$ for any field $\mathbf{k}$; and,  consequently, the minimal number of generators of $\pi_1(\Delta,*)$ is greater than or equal to $\beta_1(\Delta)$ for any field $\mathbf{k}$.

In this paper, we study simplicial complexes and simplicial posets $\Delta$ that are pure and balanced with the property that every face $F \in \Delta$ of codimension at least $2$ (including the empty face) has connected link.  This includes the class of balanced triangulations of compact manifolds and, using the language of Goresky and MacPherson in \cite{GM}, the more general class of balanced normal pseudomanifolds.  Under these weaker assumptions, we show that $$h_2 \geq {d \choose 2}m(\Delta),$$ where $m(\Delta)$ denotes the minimal number of generators of $\pi_1(\Delta,*)$.

The paper is structured as follows.  Section 2 contains all necessary definitions and background material.  In Section 3, we outline a sequence of theorems in algebraic topology that are used to give a description of the fundamental group in terms of a finite set of generators and relations.  In Section 4, we use the theorems in Section 3 to prove Theorem \ref{PathEquiv}.  This theorem gives the desired bound on $m(\Delta)$.  In Section 5, after giving some definitions related to simplicial posets, we extend the topological results in Section 3 and the result of Theorem \ref{PathEquiv} to the class of simplicial posets.

\section{Notation and Conventions}
Throughout this paper, we assume that $\Delta$ is a $(d-1)$-dimensional simplicial complex on vertex set $V = \{v_1, \ldots, v_n\}$.  We recall that the dimension of a face $F \in \Delta$ is $\dim F = |F|-1$, and the dimension of $\Delta$ is $\dim \Delta = \max\{\dim F: F \in \Delta\}$.  A simplicial complex is \textit{pure} if all of its facets (maximal faces) have the same dimension.  The \textit{link} of a face $F \in \Delta$ is the subcomplex $$\lk_{\Delta}F = \{G \in \Delta: F \cap G = \emptyset, F \cup G \in \Delta\}.$$  Similarly, the \textit{closed star} of a face $F \in \Delta$ is the subcomplex $$\st_{\Delta}F = \{G \in \Delta: F \cup G \in \Delta\}.$$

The \textit{geometric realization} of $\Delta$, denoted by $|\Delta|$, is the union over all faces $F \in \Delta$ of the convex hull in $\mathbb{R}^n$ of $\{e_i: v_i \in F\}$ where $\{e_1, \ldots, e_n\}$ denotes the standard basis in $\mathbb{R}^n$.  Given this geometric realization, we will make little distinction between the combinatorial object $\Delta$ and the topological space $|\Delta|$.  For example, we will often abuse notation and write $H_i(\Delta;\mathbf{k})$ instead of the more cluttered $H_i(|\Delta|;\mathbf{k})$.

The $f$-\textit{vector} of $\Delta$ is the vector $f=(f_{-1},f_0,f_1,\ldots, f_{d-1})$ where $f_i$ denotes the number of $i$-dimensional faces of $\Delta$.  By convention, we set $f_{-1}=1$, corresponding to the empty face.  If it is important to distinguish the simplicial complex $\Delta$, we write $f(\Delta)$ for the $f$-vector of $\Delta$, and $f_i(\Delta)$ for its $f$-numbers (i.e. the entries of its $f$-vector).  Another important combinatorial invariant of $\Delta$ is the $h$-\textit{vector} $h=(h_0,\ldots, h_d)$ where
\begin{displaymath}
h_i = \sum_{j=0}^i (-1)^{i-j}{d-j \choose d-i}f_{j-1}.
\end{displaymath}

For us, it will be particularly important to study a certain class of complexes known as balanced simplicial complexes, which were introduced by Stanley in \cite{S79}.

\begin{definition}
A $(d-1)$-dimensional simplicial complex $\Delta$ is \textbf{balanced} if its $1$-skeleton, considered as a graph, is $d$-colorable.  That is to say there is a coloring $\kappa: V \rightarrow [d]$ such that for all $F \in \Delta$ and distinct $v,w \in F$, we have $\kappa(v) \neq \kappa(w)$.  We assume that a balanced complex $\Delta$ comes equipped with such a coloring $\kappa$.
\end{definition}

The order complex of a rank-$d$ graded poset is one example of a balanced simplicial complex.  If $\Delta$ is a balanced complex and $S \subseteq [d]$, it is often important to study the \textit{$S$-rank selected subcomplex} of $\Delta$, which is defined as
\begin{displaymath}
\Delta_S = \{F \in \Delta: \kappa(F) \subseteq S\};
\end{displaymath}
that is, for a fixed coloring $\kappa$, we define $\Delta_S$ to be the subcomplex of faces whose vertices are colored with colors from $S$.  In \cite{S79} Stanley showed that
\begin{equation}\label{hnums}
h_i(\Delta) = \sum_{|S|=i}h_i(\Delta_S).
\end{equation}

\section{The Edge-Path Group}
In order to obtain a concrete description of $\pi_1(\Delta,*)$ that relies only on the structure of $\Delta$ as a simplicial complex, we introduce the \textit{edge-path group} of $\Delta$ (see, for example, Seifert and Threlfall \cite{ST80} or Spanier \cite{Spanier}).  This will ultimately allow us to relate the combinatorial data of $f(\Delta)$ to the fundamental group of $\Delta$.

An \textit{edge} in $\Delta$ is an ordered pair of vertices $(v,v^{\prime})$ with $\{v,v^{\prime}\} \in \Delta$.  An \textit{edge path} $\gamma$ in $\Delta$ is a finite nonempty sequence $(v_0,v_1)(v_1,v_2)\cdots(v_{r-1},v_r)$ of edges in $\Delta$. We say that $\gamma$ is an edge path from $v_0$ to $v_r$, or that $\gamma$ starts at $v_0$ and ends at $v_r$.  A \textit{closed} edge path at $v$ is an edge path $\gamma$ such that $v_0=v=v_r$.

We say that two edge paths $\gamma$ and $\gamma^{\prime}$ are \textit{simply equivalent} if there exist vertices $v,v^{\prime},v^{\prime\prime}$ in $\Delta$ with $\{v,v^{\prime},v^{\prime\prime}\} \in \Delta$ such that the unordered pair $\{\gamma,\gamma^{\prime}\}$ is equal to one of the following unordered pairs:
\begin{itemize}
  \item $\{(v,v\pp),(v,v\p)(v\p,v\pp)\}$,
  \item $\{\gamma_1(v,v\pp),\gamma_1(v,v\p)(v\p,v\pp)\}$ for some edge path $\gamma_1$ ending at $v$,
  \item $\{(v,v\pp)\gamma_2,(v,v\p)(v\p,v\pp)\gamma_2\}$ for some edge path $\gamma_2$ starting at $v\pp$,
  \item $\{\gamma_1(v,v\pp)\gamma_2,\gamma_1(v,v\p)(v\p,v\pp)\gamma_2\}$ for edge paths $\gamma_1,\gamma_2$ as above.
\end{itemize}

We note that the given vertices $v,v\p,v\pp \in \Delta$ need not be distinct.  For example, $(v,v)$ is a valid edge (the edge that does not leave the vertex $v$), and we have the simple equivalence $(v,v\p)(v\p,v) \sim (v,v)$.  We say that two edge paths $\gamma$ and $\gamma\p$ are \textit{equivalent}, and write $\gamma \sim \gamma\p$, if there is a finite sequence of edge paths $\gamma_0, \gamma_1, \ldots, \gamma_s$ such that $\gamma = \gamma_0$, $\gamma\p = \gamma_s$ and $\gamma_i$ is simply equivalent to $\gamma_{i+1}$ for $0 \leq i \leq s-1$.  It is easy to check that this defines an equivalence relation on the collection of edge paths $\gamma$ in $\Delta$ starting at $v$ and ending at $v\p$.  Moreover, for two edge paths $\gamma$ and $\gamma\p$ with the terminal vertex of $\gamma$ equal to the initial vertex of $\gamma\p$, we can form their product edge path $\gamma\gamma\p$ by concatenation.

Now we pick a base vertex $v_0 \in \Delta$.  Let $E(\Delta,v_0)$ denote the set of equivalence classes of closed edge paths in $\Delta$ based at $v_0$. We multiply equivalence classes by $[\gamma]*[\gamma\p] = [\gamma\gamma\p]$ to give $E(\Delta,v_0)$ a group structure called the \textit{edge path group} of $\Delta$ based at $v_0$.

The Cellular Approximation Theorem (\cite{Spanier} VII.6.17)  tells us that any path in $\Delta$ is homotopic to a path in the $1$-skeleton of $\Delta$.  We use this fact to motivate the proof of the following theorem from Spanier.

\begin{theorem}[\cite{Spanier} III.6.17]
If $\Delta$ is a simplicial complex and $v_0 \in \Delta$, then there is a natural isomorphism
\begin{displaymath}
E(\Delta,v_0) \approx \pi_1(\Delta,v_0).
\end{displaymath}
\end{theorem}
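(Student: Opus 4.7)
The plan is to build an explicit map from $E(\Delta,v_0)$ to $\pi_1(\Delta,v_0)$ and verify it is a bijective homomorphism, natural in $\Delta$. First, assign to each edge $(v,v')$ the straight-line path in $|\Delta|$ along the $1$-simplex $\{v,v'\}$ (or the constant path when $v=v'$), and extend by concatenation to obtain a loop $\phi(\gamma)$ for each closed edge path $\gamma$ based at $v_0$. To see that the induced map $\phi: E(\Delta,v_0) \to \pi_1(\Delta,v_0)$ is well-defined, I would check that each of the four simple equivalences produces path-homotopic loops; this reduces to the single observation that inside the convex $2$-simplex spanned by $v, v', v''$, the straight-line path along $\{v,v''\}$ is homotopic rel endpoints to the concatenation of the straight-line paths along $\{v,v'\}$ and $\{v',v''\}$, and the other three cases follow by pasting this homotopy between fixed loops on either side. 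That $\phi$ is a group homomorphism is immediate from the definition of the product in $E(\Delta,v_0)$, and naturality with respect to simplicial maps is obvious because simplicial maps send straight-line paths to straight-line paths.

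Surjectivity of $\phi$ is essentially the Cellular Approximation Theorem cited in the excerpt: every loop $\alpha:(I,\partial I)\to(|\Delta|,v_0)$ is homotopic rel $\partial I$ to a loop $\alpha'$ whose image lies in the $1$-skeleton of $\Delta$. After subdividing $I$ finely enough so that each subinterval is mapped into a single closed $1$-simplex of $\Delta$, $\alpha'$ is homotopic rel endpoints to a loop that traverses a concrete finite sequence of edges at constant speed, which is precisely $\phi([\gamma])$ for the corresponding closed edge path $\gamma$.

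The main obstacle is injectivity. Suppose $\phi([\gamma])$ is null-homotopic via $H:I\times I \to |\Delta|$ with $H|_{I\times\{0\}}=\phi(\gamma)$ and $H$ constantly $v_0$ on the remaining three sides of the square. I would pass to a sufficiently fine iterated barycentric subdivision of $I\times I$ and invoke the Simplicial Approximation Theorem to replace $H$ by a simplicial map $H'$ from the triangulated square into $\Delta$, taking care that $H'$ still agrees with $\phi(\gamma)$ on the top edge (which is already simplicial after appropriate subdivision) and is constantly $v_0$ on the other three sides. The map $H'$ then exhibits $\gamma$ as the boundary of a finite simplicial disk in $\Delta$. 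Sweeping across this disk one triangle at a time, each elementary move replaces a single edge $(v,v'')$ of a triangle $\{v,v',v''\}\in \Delta$ by the pair $(v,v')(v',v'')$, possibly combined with trivial collapses of degenerate edges $(v,v)$; each such move is one of the four simple equivalences. Since finitely many such moves transform $\gamma$ into the constant edge path $(v_0,v_0)$, we conclude $[\gamma]=1$ in $E(\Delta,v_0)$.
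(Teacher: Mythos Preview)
The paper does not actually prove this theorem; it is stated with a citation to Spanier and used as a black box. Your outline is essentially the classical proof found in Spanier (and in Seifert--Threlfall), so in that sense you are reproducing the source the paper appeals to rather than diverging from anything the paper itself does.

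Your argument is correct in outline. The one place worth tightening is the injectivity step. When you invoke relative simplicial approximation, you need the triangulation of $I\times I$ to restrict on the top edge to a subdivision compatible with the edge path $\gamma$ (so that $H'$ can be taken to agree with $\gamma$ there, not merely with something homotopic to it); this is standard but should be said. More substantively, the phrase ``sweeping across this disk one triangle at a time'' hides the combinatorial core of the argument: you need an ordering of the $2$-simplices of the triangulated square such that, at each stage, removing the next triangle changes the boundary loop by exactly one of your simple equivalences. One clean way is to collapse the triangulated square to a point by elementary collapses (it is collapsible, being a cone or via a shelling), tracking what each collapse does to the boundary word; degenerate images (where $H'$ crushes a $2$-simplex to an edge or a vertex) are handled by the degenerate simple equivalences you mention. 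With those details supplied, the proof goes through.
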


For a connected simplicial complex $\Delta$ we will also consider the group $G$, defined as follows.  Let $T$ be a spanning tree in the $1$-skeleton of $\Delta$.  Since $\Delta$ is connected, such a spanning tree exists.  We define $G$ to be the free group generated by edges $(v,v^{\prime}) \in \Delta$ modulo the relations
\renewcommand\theenumi{[R\arabic{enumi}]}
\begin{enumerate}
\item $(v,v^{\prime}) = 1$ if $(v,v^{\prime}) \in T$, and
\item $(v,v^{\prime})(v^{\prime},v^{\prime\prime}) = (v,v^{\prime\prime})$ if $\{v,v^{\prime},v^{\prime\prime}\} \in \Delta$.
\end{enumerate}

The following theorem will be crucial in our study of the fundamental group.
\begin{theorem}[\cite{Spanier} III.7.3]\label{Gpi1}
With the above notation, $$E(\Delta,v_0) \approx G.$$
\end{theorem}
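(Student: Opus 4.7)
The plan is to construct explicit mutually inverse homomorphisms between $G$ and $E(\Delta,v_0)$, using the spanning tree $T$ as the bridge. For each vertex $v \in \Delta$, let $\alpha_v$ denote the unique reduced edge path from $v_0$ to $v$ in $T$, and let $\bar{\alpha}_v$ denote its reverse. Note that $\alpha_{v_0}$ and $\bar{\alpha}_{v_0}$ can be taken to be the trivial edge $(v_0,v_0)$.

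First I would define $\phi: G \to E(\Delta,v_0)$ on generators by $\phi((v,v\p)) = [\alpha_v(v,v\p)\bar{\alpha}_{v\p}]$, which is a valid closed edge path at $v_0$, and extend multiplicatively. To see this descends to $G$, I would check the two defining relations. For [R1], if $(v,v\p) \in T$, then $\alpha_v(v,v\p)\bar{\alpha}_{v\p}$ traces a path in $T$ followed by its reverse; repeated application of the simple equivalence $(w,w\p)(w\p,w) \sim (w,w)$ (an instance of the first bullet with $v\pp = v$) collapses it to the trivial loop $(v_0,v_0)$. For [R2], if $\{v,v\p,v\pp\} \in \Delta$, then $\alpha_v(v,v\p)(v\p,v\pp)\bar{\alpha}_{v\pp}$ is simply equivalent to $\alpha_v(v,v\pp)\bar{\alpha}_{v\pp}$ by the fourth bullet of the simple-equivalence definition.

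Next I would define $\psi: E(\Delta,v_0) \to G$ by sending the class of a closed edge path $\gamma = (v_0,v_1)(v_1,v_2)\cdots(v_{r-1},v_0)$ to the product $(v_0,v_1)(v_1,v_2)\cdots(v_{r-1},v_0)$, now read as a word in the generators of $G$. Checking well-definedness reduces to checking that each of the four types of simple equivalence maps to an equality in $G$: in every case the modification replaces a single factor $(v,v\pp)$ by the product $(v,v\p)(v\p,v\pp)$ inside an otherwise identical word, and the needed equality is exactly relation [R2]. For edge cases involving equal vertices one uses that $(v,v) = 1$ in $G$, which follows from [R2] applied to the face $\{v\}$.

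Finally I would verify $\psi \circ \phi = \mathrm{id}_G$ and $\phi \circ \psi = \mathrm{id}_{E(\Delta,v_0)}$. The first is immediate: $\psi(\phi((v,v\p)))$ is the word $\alpha_v(v,v\p)\bar{\alpha}_{v\p}$ in $G$, and every factor along the spanning tree equals $1$ by [R1], leaving $(v,v\p)$. For the second, $\phi(\psi([\gamma]))$ is the class of $\prod_{i=0}^{r-1} \alpha_{v_i}(v_i,v_{i+1})\bar{\alpha}_{v_{i+1}}$ (indices mod $r$), and each internal segment $\bar{\alpha}_{v_i}\alpha_{v_i}$ collapses to a trivial edge at $v_i$ via the same backtracking equivalence used above, recovering $[\gamma]$.

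The main obstacle is the well-definedness of $\psi$: one must handle all four flavors of simple equivalence, and be careful that the relevant vertices are allowed to coincide, so that degenerate edges like $(v,v)$ appear. Everything else is bookkeeping with the two relations once the correct definitions are in place.
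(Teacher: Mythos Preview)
Your proposal is correct and follows exactly the approach the paper records (the paper does not prove this theorem itself but cites Spanier and then describes the isomorphism): your $\psi$ is the paper's $\Phi$, sending a closed edge path to the same word read in $G$, and your $\phi$ is the paper's $\Phi^{-1}$, sending a generator $(v,v')$ to the class of $\alpha_v(v,v')\bar{\alpha}_{v'}$. One small slip: in checking [R2] you should start from $\phi((v,v'))\phi((v',v'')) = [\alpha_v(v,v')\bar{\alpha}_{v'}\alpha_{v'}(v',v'')\bar{\alpha}_{v''}]$ and first collapse the internal $\bar{\alpha}_{v'}\alpha_{v'}$ before applying the triangle simple equivalence, but you already use that backtracking collapse elsewhere, so this is only a missing line, not a gap.
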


\noindent We note for later use that this isomorphism is given by the map $$\Phi: E(\Delta,v_0) \rightarrow G$$ that sends $[(v_0,v_1)(v_1,v_2)\cdots(v_{r-1},v_r)]_E \mapsto [(v_0,v_1)(v_1,v_2)\cdots(v_{r-1},v_r)]_G$.  Here, $[-]_E$ and $[-]_G$ denote the equivalence classes of an edge path in $E(\Delta,v_0)$ and $G$, respectively.  The inverse to this map is defined on the generators of $G$ as follows.  For $(v,v\p) \in \Delta$, there is an edge path $\gamma$ from $v_0$ to $v$ along $T$ and an edge path $\gamma\p$ from $v\p$ to $v_0$ along $T$.  Using these paths, we map $\Phi^{-1}[(v,v\p)]_G = [\gamma(v,v\p)\gamma\p]_E.$

\section{The Fundamental Group and $h$-numbers} \label{fundgroup}
Our goal now is to use Theorem \ref{Gpi1} to bound the minimal number of generators of $\pi_1(\Delta,*)$.  For ease of notation, let $m(\Delta,*)$ denote the minimal number of generators of $\pi_1(\Delta,*)$. When the basepoint is understood or irrelevant (e.g. when $\Delta$ is connected) we will write $m(\Delta)$ in place of $m(\Delta,*)$.  For the remainder of this section, we will be concerned with simplicial complexes $\Delta$ of dimension $(d-1)$ with the following properties:
\renewcommand\theenumi{(\Roman{enumi})}
\begin{enumerate}
  \item $\Delta$ is pure,
  \item $\Delta$ is balanced,
  \item $\lk_{\Delta} F$ is connected for all faces $F \in \Delta$ with $0 \leq |F| < d-1$.
\end{enumerate}
\noindent In particular, property (III) implies that $\Delta$ is connected by taking $F$ to be the empty face.

Since results on balanced simplicial complexes are well-suited to proofs by induction, we begin with the following observation.
\begin{proposition}\label{PropInduction}
  Let $\Delta$ be a simplicial complex with $d \geq 2$ that satisfies properties (I)--(III).  If $F \in \Delta$ is a face with $|F|<d-1$, then $\lk_{\Delta}F$ satisfies properties (I)--(III) as well.
\end{proposition}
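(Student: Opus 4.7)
The proof is essentially an exercise in unwinding definitions, so the plan is to verify each of the three properties for $\lk_{\Delta}F$ in turn, using the standard identity $\lk_{\lk_{\Delta}F}G = \lk_{\Delta}(F\cup G)$ for the key step. Fix $F \in \Delta$ with $k := |F| < d-1$, and set $d' := d-k \geq 2$. Since every face $G \in \lk_{\Delta}F$ satisfies $F \cup G \in \Delta$ with $F$ and $G$ disjoint, the dimension of $\lk_{\Delta}F$ is at most $d-k-1 = d'-1$.

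First I would check purity. If $G$ is a facet of $\lk_{\Delta}F$, I would argue that $F \cup G$ is a facet of $\Delta$: given any $H \in \Delta$ with $F \cup G \subseteq H$, the set $H \setminus F$ lies in $\lk_{\Delta}F$ and contains $G$, so by maximality of $G$ it equals $G$, giving $H = F \cup G$. Since $\Delta$ is pure of dimension $d-1$ by (I), we obtain $|G| = d - k = d'$, so $\lk_{\Delta}F$ is pure of dimension $d'-1$.

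Next I would verify that $\lk_{\Delta}F$ is balanced. Restricting the coloring $\kappa$ of $\Delta$ to the vertex set of $\lk_{\Delta}F$ is still a proper coloring of the $1$-skeleton. Moreover, for any vertex $v \in \lk_{\Delta}F$ we have $F \cup \{v\} \in \Delta$, so $\kappa(v) \notin \kappa(F)$; hence the restricted coloring takes values in $[d] \setminus \kappa(F)$, a set of size $d - k = d'$. Combined with purity (which forces each facet of the link to use exactly $d'$ distinct colors), this shows $\lk_{\Delta}F$ is balanced as a $(d'-1)$-dimensional complex.

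Finally, for property (III), I would let $G \in \lk_{\Delta}F$ with $0 \leq |G| < d'-1 = d-k-1$ and invoke the identity $\lk_{\lk_{\Delta}F}G = \lk_{\Delta}(F \cup G)$. Since $F$ and $G$ are disjoint, $|F \cup G| = k + |G| < d-1$, so property (III) for $\Delta$ guarantees that $\lk_{\Delta}(F \cup G)$ is connected. The same conclusion applies to the empty face of $\lk_{\Delta}F$, since $\lk_{\lk_{\Delta}F}\emptyset = \lk_{\Delta}F$ itself equals $\lk_{\Delta}(F \cup \emptyset)$ and $|F| < d-1$. I do not expect any real obstacle; the only subtlety is keeping the indexing between $d$ and $d' = d - k$ straight and remembering to include the empty face in the connectedness check.
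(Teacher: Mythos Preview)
Your proof is correct and rests on the same identity $\lk_{\lk_{\Delta}F}G = \lk_{\Delta}(F\cup G)$ that the paper uses. The paper organizes the argument slightly differently---it inducts on $|F|$ to reduce to the case where $F$ is a single vertex and then checks (I)--(III) for $\lk_{\Delta}v$---but your direct verification for arbitrary $F$ is cleaner and amounts to the same thing.
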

\begin{proof}
When $d=2$, the result holds trivially since the only such face $F$ is the empty face.  When $d>3$ and $F$ is nonempty, it is sufficient to show that the result holds for a single vertex $v \in F$.  Indeed, if we set $G = F \setminus \{v\}$, then $\lk_{\Delta}F = \lk_{\lk_{\Delta}v}G$ at which point we may appeal to induction on $|F|$.

We immediately see that $\lk_{\Delta}v$ inherits properties (I) and (II) from $\Delta$.  Finally, if $\sigma \in\lk_{\Delta}v$ is a face with $|\sigma|<d-2$, then $\lk_{\lk_{\Delta}v}\sigma = \lk_{\Delta}(\sigma \cup v)$ is connected by property (III).
\end{proof}

\begin{lemma}\label{lem1}
Let $\Delta$ be a $(d-1)$-dimensional simplicial complex with $d \geq 2$ that satisfies properties (I) and (III).  If  $F$ and $F^{\prime}$ are facets in $\Delta$, then there is a chain of facets $$F=F_0, F_1, \ldots, F_m = F^{\prime} \qquad \qquad (*)$$ such that $|F_i \cap F_{i+1}| = d-1$ for all $i$.
\end{lemma}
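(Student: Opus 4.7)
The plan is to induct on $d$. For the base case $d = 2$, the complex $\Delta$ is a graph, and property (III) applied to $F = \emptyset$ forces $\Delta$ to be connected. The facets are edges, and in any connected graph any two edges can be joined by a sequence of edges in which consecutive edges share a vertex; this gives the required chain with $|F_i \cap F_{i+1}| = 1 = d - 1$.

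For the inductive step, I assume $d \geq 3$ and the lemma in dimension $d - 2$. The argument proceeds in two stages. First, I reduce to the case that $F$ and $F'$ share a common vertex. Because $\lk_\Delta \emptyset = \Delta$ is connected by (III), there is an edge path $v_0, v_1, \ldots, v_k$ in the $1$-skeleton of $\Delta$ with $v_0 \in F$ and $v_k \in F'$. By purity (I), each edge $\{v_i, v_{i+1}\}$ extends to a facet $G_i$ of $\Delta$. The sequence $F, G_0, G_1, \ldots, G_{k-1}, F'$ is then a list of facets in which consecutive facets share at least one vertex, so it suffices to chain together any two facets that share a vertex.

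Second, suppose $F = \hat F \cup \{w\}$ and $F' = \hat F' \cup \{w\}$ both contain the vertex $w$. Facets of $\lk_\Delta w$ correspond bijectively to facets of $\Delta$ containing $w$ via $\tau \leftrightarrow \tau \cup \{w\}$, so $\hat F$ and $\hat F'$ are facets of $\lk_\Delta w$. As in the proof of Proposition \ref{PropInduction} (and without needing balancedness), the complex $\lk_\Delta w$ is $(d-2)$-dimensional and satisfies (I) and (III): purity follows from the facet correspondence, and for $\sigma \in \lk_\Delta w$ with $|\sigma| < d - 2$ one has $\lk_{\lk_\Delta w} \sigma = \lk_\Delta(\sigma \cup \{w\})$, which is connected since $|\sigma \cup \{w\}| < d - 1$. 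Applying the inductive hypothesis in $\lk_\Delta w$ produces a chain $\hat F = \hat F_0, \hat F_1, \ldots, \hat F_m = \hat F'$ of facets with $|\hat F_i \cap \hat F_{i+1}| = d - 2$, and setting $F_i := \hat F_i \cup \{w\}$ yields the desired chain of facets of $\Delta$ with $|F_i \cap F_{i+1}| = d - 1$. Concatenating the pieces of the first reduction produces the chain between the original $F$ and $F'$.

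There is no serious obstacle here; the one place to be careful is the first stage, where purity is essential so that every edge of the $1$-skeleton lies in a facet of $\Delta$, allowing the vertex-connectivity of $\Delta$ to be upgraded into facet-level adjacency.
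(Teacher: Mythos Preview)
Your proof is correct. Both arguments induct on $d$ with the same base case, but the inductive steps differ. The paper first shows that the closed star of every face is strongly connected (by applying the inductive hypothesis to links), and then argues that a \emph{maximal} strongly connected subcomplex $\Delta_0$ must absorb the closed star of each of its faces, hence is a full connected component, hence all of $\Delta$. Your route is more direct and constructive: you run an edge path in the $1$-skeleton from a vertex of $F$ to one of $F'$, extend each edge to a facet by purity, and then bridge any two consecutive facets sharing a vertex $w$ by applying the inductive hypothesis inside $\lk_\Delta w$. The paper's argument yields the mild byproduct that every closed star is strongly connected; yours gives an explicit chain whose length is visibly controlled by the edge path and the chains in the vertex links. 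Both are clean, and the care you flag about needing purity to extend edges to facets is exactly the right point.
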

\begin{remark}
We say that a pure simplicial complex satisfying property (*) is \textbf{strongly connected}.
\end{remark}
\begin{proof}
We proceed by induction on $d$.  When $d=2$, $\Delta$ is a connected graph, and such a chain of facets is a path from some vertex $v \in F$ to a vertex $v\p \in F\p$.  We now assume that $d \geq 3$.

First, we note that the closed star of each face in $\Delta$ is strongly connected.  Indeed, by induction the link (and hence the closed star $\st_{\Delta}\sigma$) of each face $\sigma \in \Delta$ with $|\sigma|<d-1$ is strongly connected.  On the other hand, if $\sigma \in \Delta$ is a face with $|\sigma| = d-1$, then every facet in $\st_{\Delta}\sigma$ contains $\sigma$ and so $\st_{\Delta}\sigma$ is strongly connected as well.  Finally, if $\sigma$ is a facet, then $\st_{\Delta}\sigma$ is strongly connected as it only contains a single facet.

It is also clear that if $\Delta\p$ and $\Delta\pp$ are strongly connected subcomplexes of $\Delta$ such that $\Delta\p \cap \Delta\pp$ contains a facet, then $\Delta\p \cup \Delta\pp$ is strongly connected as well.  Finally, suppose $\Delta_0 \subseteq \Delta$ is a maximal strongly connected subcomplex of $\Delta$.  If $F \in \Delta_0$ is any face, then $\st_{\Delta}F$ intersects $\Delta_0$ in a facet.  Since $\st_{\Delta}F \cup \Delta_0$ is strongly connected and $\Delta_0$ is maximal, we must have $\st_{\Delta}F \subseteq \Delta_0$.  Thus $\Delta_0$ is a connected component of $\Delta$.  Since $\Delta$ is connected, $\Delta = \Delta_0$.
\end{proof}

\begin{lemma}\label{lem2}
Let $\Delta$ be a $(d-1)$-dimensional simplicial complex with $d \geq 2$ that satisfies properties (I)--(III).  For any $S \subseteq [d]$ with $|S|=2$, the rank selected subcomplex $\Delta_S$ is connected.
\end{lemma}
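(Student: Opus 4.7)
The plan is to leverage Lemma \ref{lem1}. Since $\Delta$ satisfies (I) and (III), that lemma already gives us strong connectivity: any two facets are joined by a chain of facets in which consecutive members share a codimension-one face. The strategy is to show that each facet contributes exactly one edge to $\Delta_S$, and that consecutive facets along such a chain contribute edges that either coincide or share a vertex; then strong connectivity of $\Delta$ will translate directly into connectivity of $\Delta_S$.

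First, I would record the basic consequence of balance. Because $\Delta$ is pure of dimension $d-1$ and balanced, every facet $F$ uses each color in $[d]$ exactly once; write $v_k(F)$ for the unique vertex of $F$ of color $k$. For $S = \{i,j\}$, set $e_S(F) = \{v_i(F), v_j(F)\}$, which is an edge of $\Delta_S$. Conversely, every vertex or edge of $\Delta_S$ is contained in $e_S(F)$ for some facet $F$, by first extending to a facet of $\Delta$ using purity.

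Next, suppose $F$ and $F\p$ are facets with $|F \cap F\p| = d-1$. Then exactly one color $k \in [d]$ satisfies $v_k(F) \neq v_k(F\p)$. If $k \notin S$, then $e_S(F) = e_S(F\p)$; if $k \in S$, say $k = i$, then $e_S(F)$ and $e_S(F\p)$ share the vertex $v_j(F) = v_j(F\p)$. Either way, $e_S(F) \cup e_S(F\p)$ is connected inside $\Delta_S$.

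Finally, given vertices $u, w$ of $\Delta_S$, use purity to choose facets $F_u \ni u$ and $F_w \ni w$. Apply Lemma \ref{lem1} to produce a chain $F_u = F_0, F_1, \ldots, F_m = F_w$ of facets with $|F_i \cap F_{i+1}| = d-1$. The edges $e_S(F_0), e_S(F_1), \ldots, e_S(F_m)$ then trace a walk in $\Delta_S$; since $u$ has color in $S$ and lies in $F_u$, it is an endpoint of $e_S(F_u)$, and similarly $w$ is an endpoint of $e_S(F_w)$, so this walk connects $u$ to $w$. There is no serious obstacle; the only delicate piece is the color case analysis for adjacent facets, which is precisely what forces balance and strong connectivity to cooperate. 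Note that property (III) enters only through Lemma \ref{lem1}.
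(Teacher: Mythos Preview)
Your proof is correct and follows essentially the same approach as the paper: both invoke Lemma~\ref{lem1} to obtain a chain of facets and then observe that adjacent facets in the chain contribute $S$-colored edges that either coincide or share a vertex. Your explicit map $e_S(F)$ and color case analysis is a slightly cleaner packaging of the same induction-on-chain-length argument the paper gives.
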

\begin{proof}
Say $S = \{c_1,c_2\}$.  Pick vertices $v, v^{\prime} \in \Delta_S$ and facets $F \ni v$, $F^{\prime} \ni v^{\prime}$.  By Lemma \ref{lem1}, there is a chain of facets $F = F_1, \ldots, F_m = F^{\prime}$ for which $F_i$ intersects $F_{i+1}$ in a codimension $1$ face.  We claim that a path from $v$ to $v^{\prime}$ in $\Delta_S$ can be found in $\cup_{i=1}^m F_i$.

When $m=1$, $\{v,v^{\prime}\}$ is an edge in $F = F_1 = F^{\prime}$.  For $m>1$, we examine the facet $F_1$.  Without loss of generality, say $\kappa(v) = c_1$, and let $w \in F_1$ be the vertex with $\kappa(w) = c_2$.  If $\{v,w\} \in F_1 \cap F_2$, then the facet $F_1$ in our chain is extraneous, and we could have taken $F = F_2$ instead.  Inductively, we can find a path from $v$ to $v^{\prime}$ in $\Delta_S$ that is contained in $\cup_{i=2}^mF_i$.  On the other hand, if $v \notin F_2$, then we can find a path from $w$ to $v^{\prime}$ in $\Delta_S$ that is contained in $\cup_{i=2}^mF_i$ by induction.  Since $(v,w) \in \Delta_S$, this path extends to a path from $v$ to $v^{\prime}$ in $\Delta_S$.
\end{proof}

\begin{theorem}\label{PathEquiv}
Let $\Delta$ be a $(d-1)$-dimensional simplicial complex with $d \geq 2$ that satisfies properties (I)--(III), and $S \subseteq [d]$ with $|S|=2$.  If $v,v^{\prime}$ are vertices in $\Delta_S$, then any edge path $\gamma$ from $v$ to $v^{\prime}$ in $\Delta$ is equivalent to an edge path from $v$ to $v^{\prime}$ in $\Delta_S$.
\end{theorem}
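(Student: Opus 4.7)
The plan is to use the simple equivalences from Section~3 to transform $\gamma$ into a path that avoids every vertex colored outside $S$; when $d=2$ there is nothing to prove since $\Delta_S=\Delta$, so assume $d\geq 3$. Call a vertex $v_j$ of $\gamma$ \emph{bad} if $\kappa(v_j)\notin S$; the endpoints $v_0$ and $v_r$ are good by hypothesis. Let $n(\gamma)$ count the bad vertices of $\gamma$ and let $c(\gamma)$ count the indices $j$ for which both $v_j$ and $v_{j+1}$ are bad. I aim to produce an equivalent path with $n=0$, in two passes: first reduce $c$ to $0$, then reduce $n$ to $0$.

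The workhorse is a preliminary lemma, proved by induction on $k$: for any $v\in\Delta$ and any edge path $w_0,w_1,\ldots,w_k$ in $\lk_\Delta v$, iterated use of the third bullet in the definition of simple equivalence (applied to the $2$-faces $\{v,w_j,w_{j+1}\}$) yields
\[
(w_0,v)(v,w_k)\ \sim\ (w_0,w_1)(w_1,w_2)\cdots(w_{k-1},w_k)
\]
when $k\geq 1$; the degenerate case $k=0$ (with $w_0=w_k$) gives instead $(w_0,v)(v,w_0)\sim(w_0,w_0)$, a constant loop that can be absorbed into an adjacent edge of $\gamma$ by one more simple equivalence. Thus any length-two sub-path through $v$ may be traded for an edge path in $\lk_\Delta v$ joining its two endpoints, and this fact drives both reduction steps.

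For Step~1, suppose $v_j,v_{j+1}$ are both bad. Since they carry distinct colors, both in $[d]\setminus S$, we must have $d\geq 4$; the case $d=3$ is automatically vacuous. The link $\lk_\Delta\{v_j,v_{j+1}\}$ is pure of dimension $d-3$ and is balanced with color set $[d]\setminus\{\kappa(v_j),\kappa(v_{j+1})\}\supseteq S$; every facet uses all $d-2$ of these colors, so the link contains a vertex $w$ with $\kappa(w)\in S$. Applying the simple equivalence $(v_j,v_{j+1})\sim(v_j,w)(w,v_{j+1})$ inside $\gamma$ leaves $n$ unchanged and drops $c$ by one, since $w$ is good and the bad pair is broken with no new bad pair created. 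Iterating eliminates every consecutive bad pair, yielding an equivalent path with $c=0$.

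For Step~2, pick any bad vertex $v_i$; because $c=0$, its neighbors $v_{i-1}$ and $v_{i+1}$ are good and therefore lie in $L_S$, where $L=\lk_\Delta v_i$. By Proposition~\ref{PropInduction}, $L$ satisfies (I)--(III) and has dimension $d-2\geq 1$, so Lemma~\ref{lem2} applied to $L$ shows $L_S$ is connected. Choose an edge path $v_{i-1}=w_0,\ldots,w_k=v_{i+1}$ in $L_S$ and use the preliminary lemma to substitute $(w_0,w_1)\cdots(w_{k-1},w_k)$ for $(v_{i-1},v_i)(v_i,v_{i+1})$ inside $\gamma$: the bad vertex $v_i$ disappears, only good vertices are inserted, so $c$ stays $0$ and $n$ drops by one. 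Iterating terminates with $n(\gamma)=0$, producing an equivalent edge path entirely in $\Delta_S$. The main obstacle is the bookkeeping that guarantees both $c$ and $n$ move monotonically under the replacements; the substantive geometry is concentrated in the preliminary lemma and in the use of Lemma~\ref{lem2} inside $L$ to supply the in-$\Delta_S$ detour that replaces each isolated bad vertex.
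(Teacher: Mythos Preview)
Your argument is correct and rests on the same key device as the paper's proof: the ``preliminary lemma'' that lets you trade a length-two detour $(w_0,v)(v,w_k)$ through a vertex $v$ for an edge path inside $\lk_\Delta v$, combined with Lemma~\ref{lem2} applied to that link to guarantee such a path exists in $(\lk_\Delta v)_S$. The difference is purely organizational. You run a two-pass procedure governed by the counters $(c,n)$: first break every consecutive bad pair by inserting a good vertex drawn from the link of the bad edge, then eliminate each isolated bad vertex via a link detour. The paper instead performs a single left-to-right induction on the path length $r$: it always attacks the vertex $v_1$, whose left neighbor $v_0$ is automatically good (it is an endpoint or has already been fixed), so no pre-processing analogous to your Step~1 is needed. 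Your approach is a bit more symmetric and makes the termination invariant explicit; the paper's is shorter because the leftmost-first order supplies for free the ``good left neighbor'' hypothesis you manufacture in Step~1.
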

\begin{proof}
When $d=2$, $\Delta_S = \Delta$, and the result holds trivially, so we can assume $d \geq 3$.

We may write our edge path $\gamma$ as a sequence $$\gamma = (v_0,v_1)(v_1,v_2)\cdots(v_{r-1},v_r)$$ where $v_0=v$, $v_r=v^{\prime}$, and $\{v_i,v_{i+1}\} \in \Delta$ for all $i$. We establish the claim by induction on $r$.  When $r=1$, the edge $(v,v^{\prime})$ is already an edge in $\Delta_S$.  Now we assume $r>1$.  If $v_1 \in \Delta_S$, the sequence $(v_1,v_2)\cdots(v_{r-1},v_r)$ is equivalent to an edge path $\widetilde{\gamma}$ from $v_1$ to $v\p$ in $\Delta_S$ by our induction hypothesis on $r$. Hence $\gamma$ is equivalent to $(v_0,v_1)\widetilde{\gamma}$.

On the other hand, suppose that $v_1 \notin \Delta_S$.  Since $\kappa(v_1) \notin S$ and $\Delta$ is pure and balanced, there is a vertex $\widetilde{v}\in \Delta_S$ such that $\{v_1,v_2,\widetilde{v}\} \in \Delta$.  By Proposition~\ref{PropInduction}, $\lk_{\Delta}v_1$ is a simplicial complex of dimension at least $1$ satisfying properties (I)--(III).  Thus by Lemma \ref{lem2}, there is an edge path $\gamma\p = (u_0,u_1)\cdots(u_{k-1},u_k)$ such that $u_0 = v_0$, $u_k = \widetilde{v}$, and each edge $\{u_i,u_{i+1}\} \in (\lk_{\Delta}v_1)_S$.  Since each edge $\{u_i,u_{i+1}\} \in \lk_{\Delta}v_1$, it follows that $\{u_i,u_{i+1},v_1\} \in \Delta$ for all $i$.

We now use the fact that $(u,u\p)(u\p,u\pp) \sim (u,u\pp)$ for all $\{u,u\p,u\pp\} \in \Delta$ to see the following simple equivalences of edge paths.

\begin{eqnarray*}
(v_0,v_1)(v_1,\widetilde{v}) &=& (u_0,v_1)(v_1,\widetilde{v}) \\
&\sim& (u_0,u_1)(u_1,v_1)(v_1,\widetilde{v}) \\
&\sim& (u_0,u_1)(u_1,u_2)(u_2,v_1)(v_1,\widetilde{v}) \\
&& \ldots \\
&\sim& (u_0,u_1)(u_1,u_2)\cdots (u_{k-2},u_{k-1})(u_{k-1},v_1)(v_1,\widetilde{v}) \\
&\sim& (u_0,u_1)(u_1,u_2)\cdots (u_{k-2},u_{k-1})(u_{k-1},\widetilde{v}).
\end{eqnarray*}

For convenience, we write $\gamma_1 = (u_0,u_1)(u_1,u_2)\cdots (u_{k-2},u_{k-1})(u_{k-1},\widetilde{v})$.  Now we observe that $(v_0,v_1)(v_1,v_2) \sim (v_0,v_1)(v_1,\widetilde{v})(\widetilde{v},v_2)$ so that
\begin{eqnarray*}
  \gamma &=& (v_0,v_1)(v_1,v_2)(v_2,v_3)\cdots(v_{r-1},v_r) \\
  &\sim& (v_0,v_1)(v_1,\widetilde{v})(\widetilde{v},v_2)(v_2,v_3)\cdots(v_{r-1},v_r) \\
  &\sim& \gamma_1(\widetilde{v},v_2)(v_2,v_3)\cdots(v_{r-1},v_r).
\end{eqnarray*}
By induction on $r$, there is an edge path $\gamma_2$ in $\Delta_S$ from $\widetilde{v}$ to $v_r$ that is equivalent to $(\widetilde{v},v_2)(v_2,v_3)\cdots(v_{r-1},v_r)$ so that $\gamma \sim \gamma_1\gamma_2.$  Thus, indeed, $\gamma$ is equivalent to an edge path in $\Delta_S$.
\end{proof}

Setting $v = v\p = v_0$, we have the following corollary.

\begin{corollary}\label{Egen}
If $v_0 \in \Delta_S$, every class in $E(\Delta, v_0)$ can be represented by a closed edge path in $\Delta_S$.
\end{corollary}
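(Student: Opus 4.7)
The plan is to observe that this statement is a direct specialization of Theorem~\ref{PathEquiv} to the case of closed edge paths. Given an arbitrary class $[\gamma]_E \in E(\Delta,v_0)$, I would pick a representative closed edge path $\gamma = (v_0,v_1)(v_1,v_2)\cdots(v_{r-1},v_0)$ based at $v_0$. Since $v_0$ is the common initial and terminal vertex, $\gamma$ is in particular an edge path in $\Delta$ from $v_0$ to $v_0$ in the sense of the theorem.

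Next, I would invoke the hypothesis that $v_0 \in \Delta_S$. Setting $v = v^{\prime} = v_0$ in Theorem~\ref{PathEquiv} (which applies because both endpoints lie in $\Delta_S$), the theorem produces an edge path $\widetilde{\gamma}$ in $\Delta_S$ from $v_0$ to $v_0$ that is equivalent to $\gamma$. Since $\widetilde{\gamma}$ starts and ends at the same vertex $v_0$, it is a closed edge path in $\Delta_S$ based at $v_0$, and the equivalence $\gamma \sim \widetilde{\gamma}$ gives $[\gamma]_E = [\widetilde{\gamma}]_E$.

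There is essentially no obstacle here beyond verifying that the theorem's conclusion, stated for arbitrary endpoints $v, v^{\prime}$, is in fact strong enough to give a \emph{closed} representative when $v = v^{\prime} = v_0$. This is immediate from the definitions: an edge path in $\Delta_S$ from $v_0$ to $v_0$ is by definition a closed edge path in $\Delta_S$ at $v_0$. Thus every equivalence class in $E(\Delta,v_0)$ admits a representative lying entirely in $\Delta_S$, as claimed.
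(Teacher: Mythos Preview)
Your proposal is correct and matches the paper's approach exactly: the paper derives the corollary simply by setting $v = v' = v_0$ in Theorem~\ref{PathEquiv}, which is precisely what you do, with the details spelled out.
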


Now we have an explicit description of a smaller generating set of $\pi_1(\Delta,v_0)$.

\begin{lemma}\label{DeltaSGens}
Let $\Delta$ be a $(d-1)$-dimensional simplicial complex with $d \geq 2$ that satisfies properties (I)--(III).  For a fixed $S \subseteq [d]$ with $|S|=2$, the group $G$ of Theorem \ref{Gpi1} is generated by the edges $(v,v\p)$ with $\{v,v\p\} \in \Delta_S$.
\end{lemma}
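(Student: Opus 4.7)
The plan is to transport the generating statement from $E(\Delta,v_0)$ to $G$ via the explicit isomorphism $\Phi$ of Theorem \ref{Gpi1}. The only subtlety is the choice of basepoint: we must be careful to put $v_0$ inside $\Delta_S$ so that Corollary \ref{Egen} actually applies.

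First I would choose the base vertex $v_0 \in \Delta_S$; this is legitimate because for $|S|=2$ the balanced condition together with purity guarantees that $\Delta_S$ contains vertices of both colors in $S$, so in particular $\Delta_S$ is nonempty. Fix any spanning tree $T$ of the $1$-skeleton of $\Delta$ and let $G$ be the group defined using $T$. By Theorem \ref{Gpi1} we have the isomorphism $\Phi : E(\Delta,v_0) \to G$ sending $[(v_0,v_1)(v_1,v_2)\cdots(v_{r-1},v_r)]_E$ to the product $(v_0,v_1)(v_1,v_2)\cdots(v_{r-1},v_r)$ in $G$.

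Next I would apply Corollary \ref{Egen}: since $v_0 \in \Delta_S$, every class in $E(\Delta,v_0)$ can be represented by a closed edge path $(v_0,u_1)(u_1,u_2)\cdots(u_{k-1},v_0)$ all of whose edges $\{u_i,u_{i+1}\}$ lie in $\Delta_S$. The image of such a class under $\Phi$ is exactly the product $(v_0,u_1)(u_1,u_2)\cdots(u_{k-1},v_0)$ inside $G$, which is manifestly a product of generators $(v,v\p)$ corresponding to edges $\{v,v\p\} \in \Delta_S$. Hence every element of $\Phi(E(\Delta,v_0))$ lies in the subgroup $H \leq G$ generated by the edges of $\Delta_S$.

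Finally, since $\Phi$ is surjective, $\Phi(E(\Delta,v_0)) = G$, and therefore $H = G$, which is exactly the claim. I do not expect a serious obstacle here; the only thing to double-check is that the image under $\Phi$ of a representative really is a word in the claimed generators, which is immediate from the formula for $\Phi$ on edge-path classes. The substantive content has already been packaged into Theorem \ref{PathEquiv} and Corollary \ref{Egen}, so this lemma amounts to translating that content through the isomorphism $\Phi$.
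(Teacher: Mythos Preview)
Your proof is correct and follows essentially the same route as the paper: choose $v_0\in\Delta_S$, invoke Corollary~\ref{Egen} to represent each class in $E(\Delta,v_0)$ by a closed edge path in $\Delta_S$, and push this through the surjection $\Phi$ to conclude $H=G$. The only cosmetic difference is that the paper selects its spanning tree $T$ to contain a spanning tree of $\Delta_S$; that specific choice is irrelevant for this lemma (any $T$ works, as you note) and is made only to feed into the generator count in the subsequent corollary.
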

\begin{proof}
In order to use Theorem \ref{Gpi1}, we must choose some spanning tree $T$ in the $1$-skeleton of $\Delta$.  We will do this in a specific way.  Since $\Delta_S$ is a connected graph, we can find a spanning tree $\widetilde{T}$ in $\Delta_S$.  Since $\Delta$ is connected, we can extend $\widetilde{T}$ to a spanning tree $T$ in $\Delta$ so that $\widetilde{T} \subseteq T$.

By Corollary \ref{Egen}, each class in $E(\Delta,v_0)$ is represented by a closed edge path in $\Delta_S$, and hence the isomorphism $\Phi$ of Theorem \ref{Gpi1} maps $E(\Delta,v_0)$ into the subgroup of $H \subseteq G$ generated by edges $(v,v\p) \in \Delta_S$.  Since $\Phi$ is surjective, we must have $H=G$.

\end{proof}

\begin{corollary}
With $\Delta$ and $S$ as in Lemma \ref{DeltaSGens}, we have $$m(\Delta) \leq h_2(\Delta_S).$$
\end{corollary}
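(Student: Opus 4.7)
The plan is to translate Lemma \ref{DeltaSGens} into a direct count using the choice of spanning tree made in its proof, then observe that this count is exactly $h_2(\Delta_S)$.

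First, I would recall the isomorphisms $\pi_1(\Delta,v_0) \approx E(\Delta,v_0) \approx G$, so that $m(\Delta)$ equals the minimal number of generators of $G$. By Lemma \ref{DeltaSGens}, $G$ is generated by the ordered edges $(v,v\p)$ with $\{v,v\p\} \in \Delta_S$, where the spanning tree $T$ of $\Delta$ was built to contain a spanning tree $\widetilde{T}$ of the connected graph $\Delta_S$. Relation [R1] immediately forces $(v,v\p) = 1$ for each edge $(v,v\p) \in \widetilde{T}$, so these edges can be dropped from the generating set.

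Next I would cut the generating set down from ordered to unordered edges. Using relation [R2] with $v\pp = v$, we get $(v,v\p)(v\p,v) = (v,v)$, and applying [R2] with $v\p = v\pp = v$ gives $(v,v)(v,v) = (v,v)$, so $(v,v)=1$ in $G$. Hence $(v\p,v) = (v,v\p)^{-1}$, and each unordered edge $\{v,v\p\}$ of $\Delta_S$ contributes at most one generator. Consequently, $G$ is generated by the edges in $\Delta_S \setminus \widetilde{T}$, a set of cardinality
\begin{displaymath}
f_1(\Delta_S) - (f_0(\Delta_S)-1) = f_1(\Delta_S) - f_0(\Delta_S) + 1.
\end{displaymath}

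Finally, I would compute $h_2(\Delta_S)$ directly from the definition. Since $|S|=2$, the complex $\Delta_S$ is 1-dimensional, so with $d=2$ in the $h$-number formula,
\begin{displaymath}
h_2(\Delta_S) = f_1(\Delta_S) - f_0(\Delta_S) + 1,
\end{displaymath}
which matches the count above. Stringing everything together yields $m(\Delta) \leq h_2(\Delta_S)$.

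The only subtle point is the verification that loop edges $(v,v)$ and reverse edges $(v\p,v)$ need not be counted separately; this follows formally from [R2] as sketched, and is really the only thing beyond direct quotation of Lemma \ref{DeltaSGens} and a one-line $h$-number computation. No serious obstacle is anticipated.
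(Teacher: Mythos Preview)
Your argument is correct and follows essentially the same route as the paper: use Lemma \ref{DeltaSGens} together with the tree $\widetilde{T}\subseteq T$ to see that $G$ is generated by the non-tree edges of $\Delta_S$, then identify the resulting count $f_1(\Delta_S)-f_0(\Delta_S)+1$ with $h_2(\Delta_S)$. The only difference is that you spell out explicitly, via relation [R2], why one generator per \emph{unordered} edge suffices, a point the paper leaves implicit.
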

\begin{proof}
Lemma \ref{DeltaSGens} tells us that the $f_1(\Delta_S)$ edges in $\Delta_S$ generate the group $G$.  Since our spanning tree $T$ contains a spanning tree in $\Delta_S$, $f_0(\Delta_S)-1$ of these generators will be identified with the identity.  Thus $$m(\Delta) \leq f_1(\Delta_S) - f_0(\Delta_S)+1 = h_2(\Delta_S).$$
\end{proof}

While the proof of the above corollary requires specific information about the set $S$ and a specific spanning tree $T \subset \Delta$, its result is purely combinatorial.  Since $\Delta$ is connected, $\pi_1(\Delta,*)$ is independent of the basepoint, and so we can sum over all such sets $S \subset [d]$ with $|S|=2$ to get
\begin{eqnarray*}
{d \choose 2}m(\Delta) &\leq& \sum_{|S|=2} h_2(\Delta_S) \\
&=& h_2(\Delta) \qquad \mbox{by Equation (\ref{hnums})}.
\end{eqnarray*}

This gives the following theorem.
\begin{theorem} \label{boundGens}
Let $\Delta$ be a pure, balanced simplicial complex of dimension $(d-1)$ with the property that $\lk_{\Delta}F$ is connected for all faces $F \in \Delta$ with $|F| < d-1$.  Then $${d \choose 2}m(\Delta) \leq h_2(\Delta).$$
\end{theorem}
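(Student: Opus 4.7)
The plan is to combine the bound $m(\Delta) \leq h_2(\Delta_S)$ from the preceding corollary, which holds for any fixed two-element color subset $S \subseteq [d]$, with Stanley's rank-selection identity~(\ref{hnums}) by summing over all such $S$.

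First, note that property (III) applied with $F = \emptyset$ forces $\Delta$ to be connected, so the fundamental group $\pi_1(\Delta,*)$ is independent of basepoint up to isomorphism and the invariant $m(\Delta)$ is well defined. Next, for each of the ${d \choose 2}$ two-element color subsets $S \subseteq [d]$, the preceding corollary yields a separate inequality $m(\Delta) \leq h_2(\Delta_S)$; crucially, while the proof of that corollary depends on the specific $S$ (and on a spanning tree adapted to $\Delta_S$), the left-hand side $m(\Delta)$ is intrinsic to $\Delta$. Summing these ${d \choose 2}$ inequalities gives
$${d \choose 2}\, m(\Delta) \;\leq\; \sum_{|S|=2} h_2(\Delta_S).$$
Stanley's identity $h_2(\Delta) = \sum_{|S|=2} h_2(\Delta_S)$ from equation~(\ref{hnums}) then yields the stated bound in a single line.

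The main obstacle lies not in this final averaging step but in justifying the per-$S$ inequality $m(\Delta) \leq h_2(\Delta_S)$. That argument hinges on Theorem~\ref{PathEquiv}, which reduces an arbitrary edge path in $\Delta$ between two vertices of $\Delta_S$ to one lying entirely in $\Delta_S$: whenever such a path visits a vertex $v_1$ whose color lies outside $S$, one must reroute around it using the connectedness of the color-restricted link $(\lk_\Delta v_1)_S$, which in turn requires Proposition~\ref{PropInduction} (that links of low-codimension faces inherit properties (I)--(III)) together with Lemma~\ref{lem2}. Once this rerouting is in hand, choosing a spanning tree $T$ of the $1$-skeleton of $\Delta$ that extends a spanning tree $\widetilde{T}$ of $\Delta_S$ identifies $f_0(\Delta_S)-1$ of the $f_1(\Delta_S)$ edge generators of $G$ with the identity, leaving at most $f_1(\Delta_S) - f_0(\Delta_S) + 1 = h_2(\Delta_S)$ generators of $G \cong \pi_1(\Delta,*)$. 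With the per-$S$ bound established, the theorem follows immediately by the summation above.
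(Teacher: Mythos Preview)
Your proposal is correct and follows essentially the same approach as the paper: apply the preceding corollary $m(\Delta)\le h_2(\Delta_S)$ for each two-element $S\subseteq[d]$, sum the $\binom{d}{2}$ resulting inequalities, and invoke Stanley's identity~(\ref{hnums}). One small slip: the faces in Proposition~\ref{PropInduction} have \emph{high} codimension (at least~$2$), not low codimension.
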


\section{Extensions and Further Questions}
\subsection{Simplicial Posets}
We now generalize the results in Section \ref{fundgroup} to the class of simplicial posets.  A \textit{simplicial poset} is a poset $P$ with a least element $\hat{0}$ such that for any $x \in P \setminus \{\hat{0}\}$, the interval $[\hat{0},x]$ is a Boolean algebra (see  Bj\"orner \cite{B84} or  Stanley \cite{S91}).  That is to say that the interval $[\hat{0},x]$ is isomorphic to the face poset of a simplex.  Thus $P$ is graded by $\rk(\sigma) = k+1$ if $[\hat{0},\sigma]$ is isomorphic to the face poset of a $k$-simplex.  The face poset of a simplicial complex is a simplicial poset.  Following \cite{B84}, we see that every simplicial poset $P$ has a geometric interpretation as the face poset of a regular CW-complex $|P|$ in which each cell is a simplex and each pair of simplices is joined along a possibly empty subcomplex of their boundaries.  We call $|P|$ the \textit{realization} of $P$.  With this geometric picture in mind, we refer to elements of $P$ as \textit{faces} and work interchangeably between $P$ and $|P|$.  In particular, we refer to rank-$1$ elements of $P$ as vertices and maximal rank elements of $P$ as facets.  As in the case of simplicial complexes, we say that the \textit{dimension} of a face $\sigma \in P$ is $\rk(\sigma) -1$, and the dimension of $P$ is $d-1$ where $d = \rk (P) = \max\{\rk(\sigma): \sigma \in P\}$.  We say that $P$ is \textit{pure} if each of its facets has the same rank.  In addition, we can form the order complex $\Delta(\overline{P})$ of the poset $\overline{P} = P \setminus \{\hat{0}\}$, which gives a barycentric subdivision of $|P|$.

As with simplicial complexes, we define the link of a face $\tau \in P$ as $$\lk_P\tau = \{\sigma \in P : \sigma \geq \tau\}.$$  \noindent It is worth noting that $\lk_P\tau$ is a simplicial poset whose minimal element is $\tau$, but $\lk_P\tau$ is not necessarily a subcomplex of $|P|$.  All hope is not lost, however, since for any saturated chain $F = \{\tau_0<\tau_1<\ldots<\tau_r=\tau\}$ in $(\hat{0},\tau]$ we have $\lk_{\Delta(\overline{P})}(F) \cong \Delta(\overline{\lk_P(\tau)})$.  Here we say $F$ is \textit{saturated} if each relation $\tau_i < \tau_{i+1}$ is a covering relation in $P$.

We are also concerned with balanced simplicial posets and strongly connected simplicial posets.  Suppose $P$ is a pure simplicial poset of dimension $(d-1)$, and let $V$ denote the vertex set of $P$. We say that $P$ is \textit{balanced} if there is a coloring $\kappa: V \rightarrow [d]$ such that for each facet $\sigma \in P$ and distinct vertices $v,w < \sigma$, we have $\kappa(v) \neq \kappa(w)$.  If $S \subseteq [d]$, we can form the $S$-rank selected poset of $P$, defined as $$P_S = \{\sigma \in P: \kappa(\sigma) \subseteq S\} \mbox{  where  } \kappa(\sigma) = \{\kappa(v): v < \sigma, \rk(v) = 1\}.$$  We say that $P$ is \textit{strongly connected} if for all facets $\sigma,\sigma\p \in P$ there is a chain of facets $$\sigma = \sigma_0,\sigma_1,\ldots, \sigma_m = \sigma\p,$$ and faces $\tau_i$ of rank $d-1$ such that $\tau_i$ is covered by $\sigma_i$ and $\sigma_{i+1}$ for all $0 \leq i \leq m-1$.  For simplicial complexes, the face $\tau_i$ is naturally $\sigma_i \cap \sigma_{i+1}$; however, for simplicial posets, the face $\tau_i$ is not necessarily unique.

As in Section 4, we are concerned with simplicial posets $P$ of rank $d$ satisfying the following three properties:
\renewcommand\theenumi{(\roman{enumi})}
\begin{enumerate}
  \item $P$ is pure,
  \item $P$ is balanced,
  \item $\lk_{P} \sigma$ is connected for all faces $\sigma \in P$ with $0 \leq \rk(\sigma) < d-1$.
\end{enumerate}

Our first task is to understand the fundamental group of a simplicial poset by constructing an analogue of the edge-path group of a simplicial complex.  We have to be careful because there can be several edges connecting a given pair of vertices.  An \textit{edge} in $P$ is an oriented rank-2 element $e \in P$ with an initial vertex, denoted $\init(e)$, and a terminal vertex, denoted $\term(e)$.  If $e$ is an edge, we let $e^{-1}$ denote its \textit{inverse edge}, that is, we interchange the initial and terminal vertices of $e$, reversing the orientation of $e$.  We note that the initial and terminal vertices of $e$ are distinct since $[\hat{0},e]$ is a Boolean algebra.  We also allow for the degenerate edge $e = (v,v)$ for any vertex $v \in P$.  An \textit{edge path} $\gamma$ in $P$ is a finite nonempty sequence $e_0e_1\cdots e_r$ of edges in $P$ such that $\term(e_i) = \init(e_{i+1})$ for all $0 \leq i \leq r-1$.  A \textit{closed} edge path at $v$ is an edge path $\gamma$ such that $\init(e_0) = v = \term(e_r)$.  Given edge paths $\gamma$ from $v$ to $v\p$ and $\gamma\p$  from $v\p$ to $v\pp$, we can form their product edge path $\gamma\gamma\p$ from $v$ to $v\pp$ by concatenation.

Suppose $\sigma \in P$ is a rank-3 face with (distinct) vertices $v,v\p$ and $v\pp$ and edges $e,e\p$ and $e\pp$ with $\init(e) = v = \init(e\pp)$, $\init(e\p) = v\p = \term(v)$ and $\term(e\pp) = v\pp = \term(e\p)$.  Analogously to Section 3, we say that two edge paths $\gamma$ and $\gamma\p$ are \textit{simply equivalent} if the unordered pair $\{\gamma,\gamma\p\}$ is equal to one of the following unordered pairs:

\begin{itemize}
  \item $\{e\pp, ee\p\}$ or $\{(v,v), ee^{-1}\}$;
  \item $\{\gamma_1e\pp,\gamma_1ee\p\}$ or
  $\{\gamma_1, \gamma_1ee^{-1}\}$ for some edge path $\gamma_1$ ending at $v$;
  \item $\{e\pp\gamma_2,ee\p\gamma_2\}$ or
  $\{\gamma_2, (e\p)^{-1}e\p\gamma_2\}$ for some edge path $\gamma_2$ starting at~$v\pp$;
  \item $\{\gamma_1e\pp\gamma_2,\gamma_1ee\p\gamma_2\}$ for edge paths $\gamma_1,\gamma_2$ as above.
\end{itemize}

We say that two edge paths $\gamma$ and $\gamma\p$ are equivalent and write $\gamma \sim \gamma\p$ if there is a finite sequence of edge paths $\gamma = \gamma_0, \ldots, \gamma_s = \gamma\p$ such that $\gamma_i$ is simply equivalent to $\gamma_{i+1}$ for all $i$.  As in the case of simplicial complexes, this forms an equivalence relation on the collection of edge paths in $P$ with initial vertex $v$ and terminal vertex $v\p$.  We pick a base vertex $v_0$ and let $\widetilde{E}(P,v_0)$ denote the collection of equivalence classes of closed edge paths in $P$ at $v_0$.  We give $\widetilde{E}(P,v_0)$ a group structure by loop multiplication, and the resulting group is called the \textit{edge path group} of $P$ based at $v_0$.

Now we ask if the groups $\pi_1(P,v_0)$ and $\widetilde{E}(P,v_0)$ are isomorphic.  As topological spaces, $|P|$ and $\Delta(\overline{P})$ are homeomorphic and so their fundamental groups are isomorphic.  The latter space is a simplicial complex, and so we know that $E(\Delta(\overline{P}),v_0) \approx \pi_1(P,v_0)$.  The following theorem will show that indeed $\pi_1(P,v_0) \approx \widetilde{E}(P,v_0).$

\begin{theorem} \label{edgePathPoset}
Let $P$ be a simplicial poset of rank $d$ satisfying properties $(i)$ and $(iii)$.  If $v_0$ is a vertex in $P$, then $$\widetilde{E}(P,v_0) \approx E(\Delta(\overline{P}),v_0).$$
\end{theorem}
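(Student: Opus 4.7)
The plan is to construct an explicit isomorphism $\Psi\colon \widetilde{E}(P,v_0) \to E(\Delta(\overline{P}),v_0)$ together with an inverse $\Phi$ arising from a reduction argument in the barycentric subdivision. On generators, define $\Psi$ by sending an edge $e \in P$ with $\init(e)=v$ and $\term(e)=v'$ to the length-two edge path $(v,e)(e,v')$ in $\Delta(\overline{P})$, which travels through the barycenter of $e$; extend multiplicatively to edge paths. The first task is verifying that $\Psi$ descends to equivalence classes. For a rank-$3$ face $\sigma \in P$ with edges $e,e',e''$ satisfying $ee'\sim e''$, the six chains $\{v,e,\sigma\}$, $\{v',e,\sigma\}$, $\{v',e',\sigma\}$, $\{v'',e',\sigma\}$, $\{v,e'',\sigma\}$, $\{v'',e'',\sigma\}$ are all $2$-simplices of $\Delta(\overline{P})$, and together they provide a sequence of simple equivalences rerouting $(v,e)(e,v')(v',e')(e',v'')$ through $\sigma$ and back out as $(v,e'')(e'',v'')$. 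The degenerate relations $ee^{-1}\sim (v,v)$ become routine edge-path cancellations, and $\Psi$ is a group homomorphism by construction.

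For the inverse, given an edge path $\gamma$ in $\Delta(\overline{P})$ with endpoints at rank-$1$ vertices of $\overline{P}$, the strategy is to iteratively simplify $\gamma$ until it visits only vertices of $\overline{P}$ of rank $1$ or $2$, at which point it reads canonically as an edge path in $P$. Pick a vertex $\sigma$ in $\gamma$ of maximal rank $k \geq 3$ and consider a local segment $\tau_1 \sigma \tau_2$; maximality forces $\tau_1, \tau_2 < \sigma$. Since $[\hat{0},\sigma]$ is Boolean of rank $k$, its proper part has order complex $\Delta((\hat{0},\sigma)) \cong \partial\Delta^{k-1}\cong S^{k-2}$, which is connected, so any choice of path in this order complex from $\tau_1$ to $\tau_2$ can be substituted for the detour through $\sigma$ using the $2$-simplices of $\Delta(\overline{P})$ consisting of chains ending at $\sigma$. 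Each substitution strictly reduces the number of rank-$k$ vertices in the path, so iteration terminates with a path $v_0 e_0 v_1 e_1 \cdots e_{r-1} v_r$ in which each rank-$2$ vertex $e_i$ covers both $v_i$ and $v_{i+1}$, yielding an edge path $e_0 e_1 \cdots e_{r-1}$ in $P$. Define $\Phi(\gamma)$ to be the class of this edge path in $\widetilde{E}(P,v_0)$.

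The main obstacle is showing $\Phi$ is well defined: different substitution choices and different representatives of $[\gamma]$ in $E(\Delta(\overline{P}),v_0)$ must yield the same class in $\widetilde{E}(P,v_0)$. For substitutions at a vertex $\sigma$ of rank $k \geq 4$, the sphere $\Delta((\hat{0},\sigma)) \cong S^{k-2}$ is simply connected, so any two substitution paths differ by a homotopy built from $2$-simplex moves, each corresponding to a chain inside a rank-$3$ subface of $\sigma$, and these transport to $\widetilde{E}(P,v_0)$ via the rank-$3$ simple equivalences $ee'\sim e''$. The delicate case is $k=3$, where $\Delta((\hat{0},\sigma)) \cong S^1$ is not simply connected; two substitution paths can differ by a full loop around the boundary hexagon of $\sigma$, which transports to a boundary cycle of $\sigma$ in $P$, trivial in $\widetilde{E}(P,v_0)$ by a single rank-$3$ simple equivalence together with an inversion cancellation. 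Property $(iii)$ enters here for propagating these local equivalences globally: when a $2$-simplex move $\rho_1 < \rho_2 < \rho_3$ of $\Delta(\overline{P})$ involves a face $\rho_2$ of rank $\geq 3$, reducing the two sides of the move may require navigating $\lk_P(\rho_2)$ above $\rho_2$, and connectedness of this link via property $(iii)$ ensures that the move descends to an equivalence in $\widetilde{E}(P,v_0)$. Once well-definedness is settled, both $\Psi\circ\Phi = \mbox{id}$ and $\Phi\circ\Psi = \mbox{id}$ follow because $\Psi$ already outputs rank-$\leq 2$ paths that coincide with the canonical reduction defining $\Phi$, and each reduction step is reversed by the corresponding $2$-simplex relation respected by $\Psi$.
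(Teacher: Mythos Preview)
Your forward map $\Psi$ coincides with the paper's map (there called $\Phi$): an edge $e$ with endpoints $v,v'$ is sent to $(v,e)(e,v')$ in $\Delta(\overline{P})$, and one checks that the rank-$3$ relations in $\widetilde{E}(P,v_0)$ go to equivalences in $E(\Delta(\overline{P}),v_0)$. The divergence is in how the inverse is produced. The paper does \emph{not} perform your iterative lowering. Instead it observes that $\Delta(\overline{P})$, being the order complex of a pure poset with connected upper intervals, itself satisfies properties (I)--(III), and then invokes the rank-selection result already proved for simplicial complexes (Corollary~\ref{Egen}) with $S=\{1,2\}$. This immediately says that every class in $E(\Delta(\overline{P}),v_0)$ is represented by a closed edge path using only rank-$1$ and rank-$2$ vertices of $\overline{P}$, i.e.\ a path of the form $\Sd(e_0)\cdots\Sd(e_r)$, and the inverse is read off. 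Thus properties $(i)$ and $(iii)$ enter in the paper's proof exactly to make Corollary~\ref{Egen} applicable to $\Delta(\overline{P})$.

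Your direct lowering argument is a legitimate alternative, and in fact it is more general: each step replaces a maximal-rank vertex $\sigma$ by a path inside $\Delta((\hat{0},\sigma))$, and this uses only that $[\hat{0},\sigma]$ is Boolean, never purity and never the connectedness of any $\lk_P\tau$. Your paragraph invoking property $(iii)$ is therefore misplaced. When you handle a $2$-simplex move on a chain $\rho_1<\rho_2<\rho_3$, the reduction again descends into $(\hat{0},\rho_3)$; you never need to navigate the upper interval $\lk_P(\rho_2)$, and connectedness of that link plays no role. If you remove that sentence, your argument actually establishes $\widetilde{E}(P,v_0)\approx E(\Delta(\overline{P}),v_0)$ for an arbitrary simplicial poset, which is stronger than the stated theorem. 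The trade-off is that the paper's route is short because it recycles Section~\ref{fundgroup}, whereas your route is self-contained but requires the careful case analysis (especially the $k=3$ hexagon) that you sketch for well-definedness of the inverse; that analysis is correct in outline but would need to be written out more fully to be airtight, and it should be framed purely in terms of the Boolean intervals below, not the links above.
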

\begin{proof}
Given an edge $e \in P$ with initial vertex $v$ and terminal vertex $v\p$, we define an edge path in $\Delta(\overline{P})$ from $v$ to $v\p$ by barycentric subdivision as $\Sd(e) = (v,e)(e,v\p)$.  We define $\Phi: \widetilde{E}(P,v_0) \rightarrow E(\Delta(\overline{P}),v_0)$ by $$\Phi([e_0e_1\cdots e_r]_{\widetilde{E}}) = [\Sd(e_0)\Sd(e_1)\cdots \Sd(e_r)]_E.$$ \noindent It is easy to check that $\Phi$ is well-defined, as it respects simple equivalences.

We now claim that $\Delta(\overline{P})$ in fact satisfies properties (I)--(III) of Section 4.  Since $\Delta(\overline{P})$ is the order complex of a pure poset, it is pure and balanced.  Indeed, the vertices in $\Delta(\overline{P})$ are elements $\sigma \in \overline{P}$, colored by their rank in $P$.  Finally, for a saturated chain $F = \{\tau_1<\tau_2<\ldots<\tau_r=\tau\}$ in $\overline{P}$ for which $r < d-1$, we see that $\lk_{\Delta(\overline{P})}F \cong \Delta(\overline{\lk_P(\tau)})$ is connected since $\lk_P\tau$ is connected.  By Proposition 3.3 in \cite{D97}, we need only consider saturated chains here.  By Theorem \ref{DeltaSGens}, it follows that any class in $E(\Delta(\overline{P}),v_0)$ can be represented by a closed edge path in $(\Delta(\overline{P}))_{\{1,2\}}$.  In particular, we can represent any class in $E(\Delta(\overline{P}),v_0)$ by an edge path $\gamma = \Sd(e_0)\Sd(e_1)\cdots\Sd(e_r)$ for some edge path $e_0e_1\cdots e_r$ in $P$.  This gives a well-defined inverse to $\Phi$.
\end{proof}


With Theorem \ref{edgePathPoset} and the above definitions, the proofs of Proposition \ref{PropInduction}, Lemmas~\ref{lem1} and \ref{lem2}, and Theorem~\ref{PathEquiv} carry over almost verbatim to the context of simplicial posets and can be used to prove the following Lemma.

\begin{lemma}
Let $P$ be a simplicial poset of rank $d \geq 2$ that satisfies properties (i)--(iii).
\renewcommand\theenumi{\alph{enumi}}
\begin{enumerate}
\item If $\sigma \in P$ is a face and $\rk (\sigma) < d-1$, then $\lk_P \sigma$ satisfies properties (i)--(iii) as well.
\item $P$ is strongly connected.
\item For any $S \subseteq [d]$ with $|S|=2$, the rank selected subcomplex $P_S$ is connected.
\item If $v$ and $v\p$ are vertices in $P_S$, then any edge path $\gamma$ from $v$ to $v\p$ in $P$ is equivalent to an edge path from $v$ to $v\p$ in $P_S$.
\end{enumerate}
\end{lemma}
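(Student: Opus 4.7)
The plan is to prove parts (a)--(d) in order, each by reproducing the combinatorial skeleton of its simplicial-complex analogue and patching in the poset-specific subtleties at the two points where they matter: first, two facets in a simplicial poset can share several rank-$(d-1)$ cover faces rather than a unique intersection, so in arguments about facet chains we must carry along the witness face $\tau_i$ from the definition of strong connectivity; second, the simple equivalences in $\widetilde E(P, v_0)$ require a witnessing rank-$3$ face, so whenever we splice an edge into a path we must name the rank-$3$ face of $P$ that validates the move.

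For (a), I would first dispose of the vertex case: $\lk_P v$ inherits purity and balancedness directly (the coloring restricts to the colors distinct from $\kappa(v)$), and for any $\sigma \in \lk_P v$ with $\rk(\sigma) < d-1$ we have $\lk_{\lk_P v}(\sigma) = \lk_P(\sigma)$, which is connected by (iii) of $P$. The general statement then follows by induction on $\rk(\tau)$: for $\tau \neq \hat 0$, pick any vertex $v < \tau$ and use $\lk_P \tau = \lk_{\lk_P v}(\tau)$, together with the vertex case and the inductive hypothesis applied in the lower-rank poset $\lk_P v$. Part (b) then mirrors Lemma~\ref{lem1} with $d$-induction: show that every closed star is strongly connected (the rank $\geq d-1$ cases are immediate, and the lower-rank case uses part (a) together with induction on $d$), observe that two strongly connected subposets sharing a facet glue to a strongly connected subposet, and conclude that a maximal strongly connected subposet $P_0$ is closed under taking closed stars, hence is a component, hence is all of $P$.

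Part (c) then follows Lemma~\ref{lem2} almost verbatim: chain facets $\sigma = \sigma_0, \ldots, \sigma_m = \sigma'$ via part (b) with witnessing rank-$(d-1)$ faces $\tau_i$, and induct on $m$, using the $c_2$-colored vertex $w$ of $\sigma_1$ to either shorten the chain (when $v, w < \tau_0$) or extend an inductively-obtained path from $w$ to $v'$ back to $v$ along the edge of $\sigma_1$ joining them. Part (d) is the direct analogue of Theorem~\ref{PathEquiv}: induct on the length of $\gamma$, and when the first interior vertex $v_1$ is off-color, use (a) to see that $\lk_P v_1$ satisfies (i)--(iii), use (c) applied to $\lk_P v_1$ to produce an edge path in $(\lk_P v_1)_S$ joining $v_0$ to a witness $\widetilde v$ with $\kappa(\widetilde v) \in S$, and then use the rank-$3$ faces of $P$ containing $v_1, u_i, u_{i+1}$ (which exist because each $\{u_i, u_{i+1}\}$ is an edge of $\lk_P v_1$) to certify the sequence of simple equivalences that telescopes the path off $v_1$.

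The step I expect to require the most care is the bookkeeping for (d): unlike in the simplicial case, the edges between two vertices are not unique, so when writing an expression like $e \, e'$ with $\init(e) = u_i$, $\term(e) = v_1$, $\term(e') = u_{i+1}$ one must commit to specific rank-$2$ elements of $P$ that sit under a common rank-$3$ face. The witnessing rank-$3$ face is dictated by the choice of edge $(u_i, u_{i+1})$ in $\lk_P v_1$: by definition this is a rank-$3$ element $\sigma$ of $P$ with $\sigma > v_1$, and since $[\hat 0, \sigma]$ is Boolean it covers exactly the three edges $(u_i, u_{i+1})$, $(u_i, v_1)$, $(v_1, u_{i+1})$ that we need. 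Once one fixes this convention of always drawing edges from the unique rank-$3$ face furnished by the link, every simple equivalence invoked has an explicit rank-$3$ certificate and the chain of equivalences from Theorem~\ref{PathEquiv} goes through unchanged.
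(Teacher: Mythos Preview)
Your proposal is correct and is exactly what the paper does: the paper's entire proof is the sentence that ``the proofs of Proposition~\ref{PropInduction}, Lemmas~\ref{lem1} and \ref{lem2}, and Theorem~\ref{PathEquiv} carry over almost verbatim,'' and you have carried them over, correctly flagging the two places---the witnessing rank-$(d-1)$ face $\tau_i$ in facet chains, and the rank-$3$ Boolean interval certifying each simple equivalence---where the poset generality requires a word of care. (One indexing slip: in your sketch of (c), the $c_2$-colored vertex $w$ should be taken in $\sigma_0$, not $\sigma_1$; the rest is fine.)
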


As in Section 4, part (d) of this Lemma implies the following generalization of Theorem~\ref{boundGens}.

\begin{theorem} \label{boundGens2}
Let $P$ be a pure, balanced simplicial poset of rank $d$ with the property that $\lk_P\sigma$ is connected for each face $\sigma \in P$ with $\rk(\sigma) < d-1$.  Then $${d \choose 2}m(P) \leq h_2(P).$$
\end{theorem}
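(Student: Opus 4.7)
The strategy is to retrace the argument of Section 4 using the analogs now available for simplicial posets. By Theorem \ref{edgePathPoset} and the homeomorphism between $|P|$ and $|\Delta(\overline{P})|$, we have $\widetilde{E}(P, v_0) \approx \pi_1(P, v_0)$, so it suffices to bound the minimal number of generators of $\widetilde{E}(P, v_0)$. Fix a subset $S \subseteq [d]$ with $|S| = 2$, and pick $v_0 \in P_S$. By part (c) of the preceding Lemma, the rank-selected subposet $P_S$ is connected, so one may choose a spanning tree $\widetilde{T}$ of the graph underlying $P_S$ and extend it to a spanning tree $T$ of the $1$-skeleton of $P$.

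Next, I would establish the poset analog of Theorem \ref{Gpi1}. Define $G_P$ to be the free group on the set of oriented edges of $P$, modulo the relations $[e^{-1}] = [e]^{-1}$, $[(v,v)] = 1$ for each vertex $v$, $[e] = 1$ for $e \in T$, and $[e][e\p] = [e\pp]$ whenever $e, e\p, e\pp$ are the three edges of a rank-$3$ face of $P$ arranged as in the simple equivalence definition. The map sending a representative closed edge path to the product of its edge classes is well-defined on simple equivalences and hence factors through $\widetilde{E}(P, v_0)$. An inverse is defined on generators by sending $[e]$ with $\init(e) = v$, $\term(e) = v\p$ to the $\widetilde{E}$-class of $\gamma\, e\, \gamma\p$, where $\gamma$ is the unique edge path from $v_0$ to $v$ in $T$ and $\gamma\p$ is the unique edge path from $v\p$ to $v_0$ in $T$. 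This produces an isomorphism $\widetilde{E}(P, v_0) \approx G_P$.

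Combining this isomorphism with part (d) of the Lemma shows that $G_P$ is already generated by the classes $[e]$ with $e$ an edge of $P_S$. The $f_0(P_S) - 1$ generators coming from $\widetilde{T} \subseteq T$ are trivial by the tree relation, leaving at most $f_1(P_S) - f_0(P_S) + 1 = h_2(P_S)$ generators, so $m(P) \leq h_2(P_S)$. Since $S$ was arbitrary, summing over the ${d \choose 2}$ two-element subsets of $[d]$ and invoking the simplicial-poset version of Stanley's identity $h_2(P) = \sum_{|S| = 2} h_2(P_S)$ yields ${d \choose 2} m(P) \leq h_2(P)$.

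The main technical obstacle is the rigorous verification of the poset analog of Theorem \ref{Gpi1}: the original Spanier proof is written for simplicial complexes, where each rank-$3$ face carries a canonical triple of edges determined by its vertex set, whereas in $P$ several distinct rank-$3$ faces may share the same triple of vertices. One clean way around this is to push the presentation of $G_{\Delta(\overline{P})}$ back to $P$ through the subdivision map $\Sd$ used in the proof of Theorem \ref{edgePathPoset}, checking that each rank-$3$ relation in $G_P$ arises as a composite of triangle relations in $\Delta(\overline{P})$ and, conversely, that the new triangle relations introduced by subdivision follow from the rank-$3$ relations in $P$ together with the tree relation along the spanning tree of $\Delta(\overline{P})$ extending $T$.
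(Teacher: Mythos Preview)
Your proposal is correct and follows essentially the same route as the paper: the paper simply asserts that ``as in Section 4, part (d) of this Lemma implies'' Theorem~\ref{boundGens2}, which amounts to exactly the chain of steps you spell out---represent every class in $\widetilde{E}(P,v_0)$ by a closed edge path in $P_S$, pass to a presentation by edges modulo tree and triangle relations, count surviving generators as $h_2(P_S)$, and sum over $S$ via the balanced identity $h_2(P)=\sum_{|S|=2}h_2(P_S)$. You are more explicit than the paper about the need for a poset analog of Theorem~\ref{Gpi1} and about how to verify it via $\Delta(\overline{P})$, but this is exactly the kind of detail the paper suppresses under ``carry over almost verbatim.''
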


\subsection{How Tight are the Bounds?}

We now turn our attention to a number of examples to determine if the bounds given by Theorems \ref{boundGens} and \ref{boundGens2} are tight.  We begin by studying a family of simplicial posets constructed by Novik and Swartz in \cite{NS}.  Lemma 7.6 in \cite{NS} constructs a simplicial poset $X(1,d)$ of dimension $(d-1)$ satisfying properties (i)--(iii) whose geometric realization is a $(d-2)$-disk bundle over $\mathbb{S}^1$ and $h_2(X(1,d))={d \choose 2}$.  As $X$ is a bundle over $\mathbb{S}^1$ with contractible fiber, we have $\pi_1(X(1,d),*) \approx \mathbb{Z}$ so that $m(X(1,d))=1$.  This construction shows that the bound in Theorem \ref{boundGens2} is tight.  Moreover, taking connected sums of $r$ copies of $X(1,d)$ (when $d \geq 4$) gives a simplicial poset $P$ whose fundamental group is isomorphic to $\mathbb{Z}^r$ and $h_2(P) = r{d \choose 2}$.  We do not know, however, if the bound in Theorem \ref{boundGens2} is tight when $\pi_1(P,*)$ is either non-free or non-Abelian.  We would also like to know if Theorem \ref{boundGens2} holds if we drop the condition that $P$ is balanced.

\section{Acknowledgements}
I am grateful to my advisor, Isabella Novik, for her guidance, and for carefully editing many preliminary drafts of this paper.  I am also grateful to the anonymous referees who provided many helpful suggestions.

\end{document}